\theoremstyle{definition}
\newtheorem{theorem}{Theorem}
\newtheorem{definition}[theorem]{Definition}
\newtheorem{conjecture}[theorem]{Conjecture}
\newcommand\acks{\section*{Acknowledgements}}
\begin{document}

\author{Jonathan Fine\\
15 Hanmer Road, Milton Keynes, MK6 3AY, United Kingdom}
\title{A note on braids and Parseval's theorem}
\date{22 November 2009}

\maketitle

\newtheorem{problem}[theorem]{Problem}

\begin{abstract}
\noindent
In 1988 Falk and Randell, based on Arnol'd's 1969 paper on braids,
proved that the pure braid groups are residually nilpotent.  They also
proved that the quotients in the lower central series are free abelian
groups.

This brief note uses an example to provide evidence for a much
stronger statement: that each braid $b$ can be written as an infinite
sum $b =\sum_0^\infty b_i$, where each $b_i$ is a linear function of
the $i$-th Vassiliev-Kontsevich $Z_i(b)$ invariant of $b$.  The
example is pure braids on two strands.  This leads to solving
$e^\tau=q$ for $\tau$ a Laurent series in $q$. We set $\tau =
\sum_1^\infty (-1)^{n+1} (q^n - q^{-n})/n$ and use Fourier series and
Parseval's theorem to prove $e^\tau=q$.

For more than two strands the stronger statement seems to rely on an
as yet unstated Plancherel theorem for braid groups, which is likely
both to be both and to have deep consequences.
\end{abstract}

\noindent
Throughout we will let $P_n$ denote the group of pure braids on
$n$-strands and $\mathcal{P}_n$ the $L^2$ space of square-summable
functions $f:\mathcal{P}_n\to\mathbb{C}$.  We will think of $v
\in\mathcal{P}_n$ as a convergent infinite formal sum of elements of $P_n$.

For the convenience of the reader, we first restate the main result of
Falk and Randell \cite[pp221-2]{falk1986pure}, which relies on earlier
work of Arnol'd~\cite{arnold1969cohomology}.  In their words (p217),
they show that the lower central series of the pure braid group
$P_n$ is identical to that of an appropriate product of free groups.

\begin{theorem}[Residual nilpotence, Falk and Randall]
For any group $G$ the \emph{lower central series} $\{G_i\}$ is given
by $G_0=G$ and $G_{i+1}=[G_i, G]$, where $[A,B]$ denotes the subgroup
generated by commutators of elements of $A$ and $B$.  Let $G = P_n$.
Then $\bigcap_{i=1}^\infty G_i = \{1\}$.  In addition the quotients
$G_i/G_{i+1}$ are free abelian groups.
\end{theorem}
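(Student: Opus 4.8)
The plan is to exploit the fibration structure of configuration space that underlies Arnol'd's computation, realizing $P_n$ as an iterated semidirect product of free groups and then showing that each factorization is an \emph{almost direct product}, from which both assertions follow formally. First I would invoke the Fadell--Neuwirth fibration: deleting the last puncture exhibits the $n$-point configuration space of $\mathbb{C}$ as a fiber bundle over the $(n-1)$-point configuration space, with fiber a plane minus $n-1$ points. Since every space in sight is a $K(\pi,1)$ and the bundle admits a section, applying $\pi_1$ yields a split short exact sequence
\begin{equation*}
1 \longrightarrow F_{n-1} \longrightarrow P_n \longrightarrow P_{n-1} \longrightarrow 1,
\end{equation*}
where $F_{n-1}$ is free of rank $n-1$, generated by the pure braids $A_{i,n}$ in which the last strand loops around the $i$-th. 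Iterating gives $P_n = F_{n-1}\rtimes(F_{n-2}\rtimes(\cdots\rtimes F_1))$.

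The base cases are classical and supply both target properties for each free factor. By a theorem of Magnus a free group $F$ is residually nilpotent, so $\bigcap_i F_i = \{1\}$, and by Witt's description of the free Lie ring the quotients $F_i/F_{i+1}$ are free abelian, of rank given by the necklace counting formula. It therefore remains only to propagate these two properties through the semidirect products.

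The engine is a general lemma, which I would state and prove for a split extension $1\to A\to G\to B\to 1$ in which $B$ acts trivially on the abelianization $A/A_1$ (the defining condition of an almost direct product): the associated graded $\operatorname{gr}_q(G)$ of the lower central series splits as $\operatorname{gr}_q(A)\oplus\operatorname{gr}_q(B)$, and $G$ is residually nilpotent whenever $A$ and $B$ are. The mechanism is that the hypothesis forces every mixed commutator $[a,b]$ with $a\in A$, $b\in B$ to lie in $A_1=[A,A]$, so at the graded level the cross terms vanish and the filtrations of $A$ and $B$ recombine additively; residual nilpotence then follows by chasing an element of $\bigcap_q G_q$ first into $A$ using $\bigcap_q B_q=\{1\}$ and then to the identity using $\bigcap_q A_q=\{1\}$. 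Feeding the free factors into this lemma and inducting on $n$ gives $\operatorname{gr}_q(P_n)\cong\bigoplus_{k=1}^{n-1}\operatorname{gr}_q(F_k)$, a finite direct sum of free abelian groups, which proves both claims at once.

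The main obstacle is verifying the almost direct product hypothesis, namely that $P_{n-1}$ acts trivially on $A/A_1\cong\mathbb{Z}^{n-1}$. Conjugating a generator $A_{i,n}$ by a pure braid expresses it as a product of conjugates of the $A_{j,n}$, and one must check that abelianizing this word returns $A_{i,n}$ unchanged. This is precisely the point at which Arnol'd's cohomology calculation does the real work (equivalently, the degeneration of the Leray--Serre spectral sequence of the fibration, with its trivial monodromy on the first homology of the fiber). Once this is in hand, everything else is formal manipulation of the lower central series.
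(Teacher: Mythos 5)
You should know at the outset that the paper offers no proof of this theorem at all: it is restated ``for the convenience of the reader'' from Falk and Randell (who build on Arnol'd), and the note's own work begins only afterwards. So the comparison is really between your proposal and the cited source. On that score your proposal is correct and is essentially a reconstruction of Falk and Randell's own argument: they use the split Fadell--Neuwirth fibrations to write $P_n$ as an iterated almost direct product of the free groups $F_{n-1}, F_{n-2}, \ldots, F_1$, quote Magnus (residual nilpotence of free groups) and Magnus--Witt (free abelian lower central quotients), and prove the splitting $G_q = A_q \rtimes B_q$ for an almost direct product $G = A \rtimes B$, which yields $\operatorname{gr}_q(P_n) \cong \bigoplus_{k=1}^{n-1} \operatorname{gr}_q(F_k)$ --- exactly the statement the paper quotes, that the lower central series of $P_n$ ``is identical to that of an appropriate product of free groups.''

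Two corrections of emphasis within your outline. First, the proof of your key lemma is where the real work sits, and your sketch undersells it: knowing $[a,b] \in A_1$ for $a \in A$, $b \in B$ gives only the first step $G_1 = A_1 \rtimes B_1$. The inductive step needs the stronger fact $[A, B_q] \subseteq A_{q+1}$, and this does not follow formally from cross terms dying in the abelianization. It is true, and provable: trivial action of $B$ on $A/A_1$ forces trivial action on every quotient $A_i/A_{i+1}$ (because $\operatorname{gr}(A)$ is generated in degree one), so the image of $B$ in $\operatorname{Aut}(A/A_{q+1})$ lies in the stability group of a central series of length $q+1$; by Kaloujnine's theorem that stability group is nilpotent of class at most $q$, hence it kills the image of $B_q$, which is the required containment. (Falk and Randell run an equivalent double induction.) Second, the step you call the main obstacle --- verifying that $P_{n-1}$ acts trivially on $H_1(F_{n-1})$ --- is actually the elementary part and needs no cohomology: because pure braids fix the punctures, conjugation by an element of $P_{n-1}$ sends each generator $A_{i,n}$ of the fiber group to a conjugate of $A_{i,n}$ by an element of $F_{n-1}$ itself (this can be read off the standard presentation of $P_n$, or seen geometrically), and such a word dies in the abelianization. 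Arnol'd's computation is background to Falk and Randell's broader fiber-type arrangement story, not what powers this check. With those two adjustments your argument is complete and is, in substance, the proof the paper is citing.
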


For $P_n$ to be the fundamental group of its configuration space $M_n$
a base point $(z_1, \ldots, z_n)$ must be chosen.  Once that is done,
each element $b\in P_n$ has Vassiliev-Kontsevich invariants $Z_i(b)$
lying in finite-dimensional value spaces $A_{n,i}$.  We can now state

\begin{conjecture}
\label{conj:b=sumbi}
Let $Z_i(b)$ be as above.  Then there are linear functions
$\psi_i:A_{n,i}\to \mathcal{P}_n$ such that $b = \sum_{i=0}^\infty
b_i$, where $b_i = \psi_i(Z_i(b))$.  In addition, $\psi_{i+j}(v_iv_j)
= \psi_i(v_i)\psi_j(v_j)$.
\end{conjecture}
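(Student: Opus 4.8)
The plan is to read Conjecture~\ref{conj:b=sumbi} as the statement that the degreewise Kontsevich integral admits a grading- and product-preserving left inverse valued in $\mathcal{P}_n$. Extending $Z=\sum_i Z_i$ by $\mathbb{C}$-linearity gives an algebra homomorphism $Z\colon\mathbb{C}[P_n]\to\hat{A}_n:=\prod_i A_{n,i}$ into the completed chord-diagram algebra, multiplicative because concatenation of pure braids maps to the product of diagrams, with each $Z(b)$ group-like, $Z(b)=\exp(\xi)$ for a primitive $\xi$. The stipulated $\psi=\sum_i\psi_i$ is then exactly a graded algebra homomorphism $\psi\colon\hat{A}_n\to\mathcal{P}_n$: the clause $\psi_{i+j}(v_iv_j)=\psi_i(v_i)\psi_j(v_j)$ is precisely multiplicativity for the convolution product on $\mathcal{P}_n$, and the expansion $b=\sum_i\psi_i(Z_i(b))$ is the assertion $\psi\circ Z=\mathrm{id}$ on $P_n$. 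So I must (i) build $\psi$, (ii) check $\psi\circ Z=\mathrm{id}$, and (iii) verify that $\psi$ genuinely lands in $L^2$.

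For (i) I would combine the Falk and Randell theorem \cite{falk1986pure} with Kohno's identification of $\hat{A}_n$ as the completed enveloping algebra $\hat{U}(\mathfrak{g}_n)$ of the graded Lie algebra $\mathfrak{g}_n=\bigoplus_i (G_i/G_{i+1})\otimes\mathbb{C}$ of primitives. Since each $G_i/G_{i+1}$ is free abelian, choose for every $i$ a $\mathbb{Z}$-basis with representatives $g\in G_i$; each generates a cyclic subgroup $\langle g\rangle\cong\mathbb{Z}$, along which $Z$ restricts to the two-strand picture. I define $\psi$ on primitives by declaring that along each such $\langle g\rangle$ it is the principal-value logarithm already used for $n=2$, $\psi(\log Z(g)):=\sum_{k\ge1}(-1)^{k+1}(g^k-g^{-k})/k\in\mathcal{P}_n$, and I extend $\psi$ to all of $\hat{A}_n$ by multiplicativity, equivalently $\psi(\exp\xi)=\exp_{*}(\psi\xi)$ with $\exp_{*}(f)=\sum_{k\ge0}f^{*k}/k!$ the convolution exponential. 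Because the leading terms of the $\log Z(g)$ form a basis of $\mathfrak{g}_n$ degree by degree, this determines a well-defined graded linear $\psi_i$ on each $A_{n,i}$, and the multiplicativity clause of the conjecture holds by construction.

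Step (ii) is then largely formal. In degree one the identity $\psi(Z(g))=g$ reduces, one $\mathbb{Z}$-direction at a time, to the $n=2$ identity $\exp_{*}(\tau)=q$ proved by the Fourier and Parseval computation. Since $\psi$ and $Z$ are both graded and multiplicative, agreement propagates up the filtration $\{G_i\}$, and the residual nilpotence $\bigcap_i G_i=\{1\}$ of Falk and Randell both makes the correcting sum $b=\sum_i\psi_i(Z_i(b))$ converge as a formal series and shows that $Z$ separates pure braids, so the left inverse is unique.

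The hard part is (iii): showing that the formal identity is an identity in $\mathcal{P}_n$, i.e. that the iterated convolutions defining $\exp_{*}$ remain square-summable. Along any single direction this is free, being exactly the $n=2$ content: Parseval turns $\|\tau\|^2_{\mathcal{P}_2}$ into the finite integral $\tfrac{1}{2\pi}\int_{-\pi}^{\pi}\theta^2\,d\theta$ and identifies $\exp_{*}(\tau)$ spectrally, and the same holds verbatim inside each $L^2(\langle g\rangle)$. The obstruction is that the convolution products in $\exp_{*}$ mix elements supported on distinct, non-commuting cyclic subgroups, and for $n>2$ the group is genuinely noncommutative — already $P_3\cong F_2\times\mathbb{Z}$ contains a nonabelian free group — hence not of type I, so no classical Plancherel decomposition of $\mathcal{P}_n$ governs these mixed products. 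I would instead work inside the group von Neumann algebra $L(P_n)$ with its canonical trace $\mathrm{tr}$, for which $\|f\|^2_{\mathcal{P}_n}=\mathrm{tr}(f^{*}f)$, and try to bound this trace by a rapid-decay (Haagerup-type) inequality controlling convolution through a word-length function on $P_n$. Whether such an inequality holds for $P_n$ is itself open, and establishing it — the concrete ``Plancherel theorem for braid groups'' anticipated in the abstract, together with the uniform $\ell^2$ control of the convolution exponential it would supply — is the principal obstruction that a full proof of Conjecture~\ref{conj:b=sumbi} must overcome.
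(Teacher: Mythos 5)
You have not proved the statement, and you essentially concede this yourself: your step (iii) ends by declaring the needed rapid-decay/Plancherel inequality for $P_n$ ``itself open.'' That is the honest assessment, and it matches the paper, which does not prove Conjecture~\ref{conj:b=sumbi} either: the statement is a \emph{conjecture}, and the paper's actual content is only the two-strand instance, namely Theorem~\ref{thm:q=exp(tau)} ($\exp(\tau)=q$ for $\tau=\sum_{n\ge1}(-1)^{n+1}(q^n-p^n)/n$) proved via Theorem~\ref{thm:c_n(tau^m)}, Parseval, and Fubini, with the abstract explicitly flagging that $n>2$ ``seems to rely on an as yet unstated Plancherel theorem for braid groups.'' Your reduction of the one-direction content to that Fourier computation is faithful to the paper, but everything beyond it is a programme, not a proof, so the writeup establishes strictly less than it announces in its opening paragraph.

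Moreover, even modulo the admitted analytic gap, steps (i) and (ii) have unrepaired holes. Defining $\psi$ on primitives by choosing representatives $g$ of a basis of each $G_i/G_{i+1}$ and then ``extending by multiplicativity'' is not well defined: for $n\ge3$ the algebra $\hat{U}(\mathfrak{g}_n)$ is noncommutative, and an algebra homomorphism out of it exists only if the assigned images $\psi(\log Z(g))$ satisfy the bracket relations of $\mathfrak{g}_n$ in the target --- a compatibility you never verify, and which is not even formulable as stated, because convolution does not make $\mathcal{P}_n=L^2(P_n)$ an algebra (the convolution of two square-summable functions on an infinite group need not be square-summable, which is precisely why the paper must prove, not assume, that $\tau^m\in\mathcal{P}_2$). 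Your degree-one check ``one $\mathbb{Z}$-direction at a time'' also ignores cross terms: a general $b$ is a product of generators from non-commuting cyclic subgroups, and $Z_i$ of such a product mixes contributions in a way the single-direction identity does not control. Finally, residual nilpotence of Falk and Randell gives separation of points by the $Z_i$ and convergence of $\sum_i\psi_i(Z_i(b))$ only as a \emph{formal} series in the associated graded object; convergence in the $L^2$ norm of $\mathcal{P}_n$, which is what the conjecture asserts, is an analytic statement that no amount of filtration bookkeeping supplies --- for $n=2$ the paper obtains it only through the explicit Parseval estimate, and for $n>2$ nothing replaces it.
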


The conjecture states that the \emph{inverse problem}, to that of
computing $Z_i(b)$ from $b$, has a solution.  We will now prove the
conjecture for $n=2$.  We have $P_2\cong\mathbb{Z}$.  Let $q$ denote a
generator of $P_2$ and $p = q^{-1}$ its inverse.  Because the volume
of the unit $n$-simplex is $1/n!$ it follows that $Z_n(q) = t^n/n!$
where $t=Z_1(q)$.  Here is the conjecture, applied to $q$.

\begin{problem}
Find a $\tau\in\mathcal{P}_2$ such that $q = 1 + \tau + \tau^2/2! +
\tau^3/3! + \ldots$.
\end{problem}

This is a shorthand for saying first that the convolutions $\tau,
\tau^2, \tau^3, \ldots$ all lie in $\mathcal{P}_2$ and second that the
sum $1 + \tau + \tau^2/2! + \ldots$ converges to $q\in \mathcal{P}_2$.
To prove this result we use Fourier series and Parseval's theorem.  We
write $\mathcal{P}_2$ as $L^2(\mathbb{Z})$.  The problem now is to
solve $e^\tau = q$ for $\tau$ a Laurent series in $q$.

We use a trick to obtain a candidate for $\tau\in L^2(\mathbb{Z})$.
Write $p=q^{-1}$.  We can write $q = (1+q)/(1+p)$ and so at least
formally our candidate is $\ln(1+q) - \ln(1+p)$.

\begin{definition}
\label{def:tau}
\begin{equation*}
\tau = \sum_1^\infty (-1)^{n+1}(q^n-p^n)/n \in \mathcal{P}_2
\end{equation*}
Because $\sum_1^\infty 1/n^2$ is absolutely convergent, $\tau$ is in
$\mathcal{P}_2$.  Note that $f(z) = \sum_1^\infty
(-1)^{n+1}(z^n-z^{-n})/n$ is nowhere absolutely convergent.
\end{definition}

The next result appears to be new.  Its proof is an exercise in
Fourier series which, for lack of a suitable reference, we give here.
The proof has an algebraic part and an analytic part.

\begin{theorem}
  \label{thm:q=exp(tau)}
  $\tau \in L^2(\mathbb{Z})$, as defined in Definition~\ref{def:tau},
  satisfies $\exp(\tau) = q$.
\end{theorem}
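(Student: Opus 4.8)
The plan is to pass to the Fourier side, where the group algebra structure becomes transparent. Identifying $\mathcal{P}_2 = L^2(\mathbb{Z})$ with $L^2(\mathbb{T})$ by Fourier series, the generator $q$ (shift by $+1$) corresponds to the bounded function $z = e^{i\theta}$, the inverse $p = q^{-1}$ to $e^{-i\theta}$, and the convolution product on $L^2(\mathbb{Z})$ to pointwise multiplication of functions on the circle. Under this dictionary, proving $\exp(\tau) = q$ reduces to showing that the transform $\hat\tau$ of $\tau$ satisfies $\exp(\hat\tau(\theta)) = e^{i\theta}$, together with the analytic statement that the exponential series converges in $L^2$.

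The algebraic part is to identify $\hat\tau$. Termwise, $q^n - p^n$ maps to $e^{in\theta} - e^{-in\theta} = 2i\sin(n\theta)$, so I would write $\hat\tau(\theta) = 2i\sum_{n=1}^\infty \frac{(-1)^{n+1}}{n}\sin(n\theta)$ and invoke the classical sawtooth series $\sum_{n=1}^\infty \frac{(-1)^{n+1}}{n}\sin(n\theta) = \theta/2$ on $(-\pi,\pi)$, giving $\hat\tau(\theta) = i\theta$. This matches the heuristic $\tau = \ln(1+q) - \ln(1+p)$: writing $1 + e^{\pm i\theta} = 2\cos(\theta/2)\,e^{\pm i\theta/2}$ and subtracting the logarithms cancels the real part and leaves $i\theta$, and then $\exp(i\theta) = e^{i\theta}$ reproduces $q$, consistently with the group identity $q(1+p) = q + 1 = 1 + q$.

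The decisive feature is that $\hat\tau(\theta) = i\theta$ is \emph{bounded} on $(-\pi,\pi)$, and this is what drives the analytic part. Because multiplication by a bounded function preserves $L^2$, each convolution power $\tau^k$ lies in $\mathcal{P}_2$ with transform $(i\theta)^k$; and since $|\theta| \le \pi$, the partial sums $\sum_{k=0}^N (i\theta)^k/k!$ converge uniformly on $[-\pi,\pi]$, hence in $L^2(\mathbb{T})$, to $e^{i\theta}$. Transporting back through the Parseval isometry, $\sum_{k=0}^\infty \tau^k/k!$ converges in $\mathcal{P}_2$ to the element whose transform is $e^{i\theta}$, namely $q$ (its only nonzero Fourier coefficient is $1$, at $n = 1$), which is the assertion.

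The main obstacle is the rigorous identification $\hat\tau = i\theta$ in $L^2$, and this is precisely where Parseval's theorem enters. As observed after Definition~\ref{def:tau}, the defining series is nowhere absolutely convergent, so one cannot sum it pointwise and must instead define $\tau$ as the $L^2(\mathbb{Z})$ element given by its square-summable coefficients, appealing to Riesz--Fischer and Parseval for a unique $L^2$ transform whose Fourier coefficients coincide with those of the sawtooth function $i\theta$. One must also resist exponentiating $\ln(1+q)$ and $\ln(1+p)$ separately: the transform of $\ln(1+q)$ is unbounded near $\theta = \pi$, and the formal inverse $(1+p)^{-1} = \sum_k (-1)^k p^k$ is not even in $L^2(\mathbb{Z})$, so the individual factors live only in the ring of formal series. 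Keeping $\tau$ intact, so that its single bounded transform $i\theta$ controls all the convolution powers at once, is exactly what converts the formal identity into a genuine statement in $\mathcal{P}_2$.
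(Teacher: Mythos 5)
Your proof is correct and takes essentially the same route as the paper: both pass to the Fourier side, identify $\tau$ with the sawtooth function $i\theta$ (the paper via the direct coefficient computation $c_n(\theta)=i(-1)^n/n$, you via the classical sine series plus Riesz--Fischer), show that the convolution powers $\tau^m$ correspond to the pointwise powers $(i\theta)^m$ using Parseval, and recover $q$ as the unique element of $\mathcal{P}_2$ whose transform is $e^{i\theta}$. The only difference is packaging: where the paper proves $c_n(\tau^m)=\frac{1}{2\pi}\int_{-\pi}^{\pi}e^{-in\theta}(i\theta)^m\,d\theta$ by induction with an explicit Parseval pairing and then sums over $m$ via Fubini, you use boundedness of $i\theta$ together with uniform convergence of $\sum_k(i\theta)^k/k!$ on $[-\pi,\pi]$, which transfers through the Plancherel isometry and, if anything, delivers the $L^2$ convergence of $\sum_k\tau^k/k!$ to $q$ slightly more directly than the paper's coefficient-wise argument.
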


For any integrable function $f$ defined on $[-\pi, \pi]$ we as usual
let $ c_n(f) = 1/2\pi \int_{-\pi}^{\pi}
e^{-in\theta}f(\theta)\,d\theta $ denote the $n$-th complex Fourier
coefficient of $f$.  For the function $f(\theta)=\theta$ we have
\begin{equation}
\label{eqn:cn(f)}
  c_n(f) = \frac{1}{2\pi} \int_{-\pi}^\pi e^{-in\theta}\theta\, d\theta
  = \left.\frac{i}{2n\pi}e^{-in\theta}\theta\right|_{-\pi}^{\pi}
  - \frac{i}{2n\pi}\int_{-\pi}^\pi e^{-in\theta}\, d\theta
  =\frac{i(-1)^{n}}{n}
\end{equation}
for $n\ne 0$, while $c_0(f) = \int_{-\pi}^\pi\theta\,d\theta = 0$.
Thus, as a series $\tau$ is the Fourier transform of $i\theta$.

We can extend (\ref{eqn:cn(f)}) as follows.  For $\psi$ in
$L^2(\mathbb{Z})$ we use $c_n(\psi)$ to denote $\psi_n$, which we also
interpret as the coefficient of $q^n$.

\begin{theorem}
\label{thm:c_n(tau^m)}
\begin{equation*}
  c_n(\tau^m) = \frac{1}{2\pi}\int_{-\pi}^{\pi}\> e^{-in\theta}\>(i\theta)^m \> d\theta
\end{equation*}
\end{theorem}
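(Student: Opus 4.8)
The plan is to prove the identity by induction on $m$, exploiting the fact that multiplication in $\mathcal{P}_2 \cong L^2(\mathbb{Z})$ is convolution of Fourier-coefficient sequences, which under the transform $c_n$ corresponds to ordinary pointwise multiplication of functions on $[-\pi,\pi]$. In these terms the claim reduces to the single statement that $\tau^m$ is the Fourier-coefficient sequence of the function $(i\theta)^m$, since the right-hand side is by definition $c_n\bigl((i\theta)^m\bigr)$. This is exactly the split into an algebraic part (identifying convolution with multiplication) and an analytic part (justifying the convolution theorem) that the problem anticipates.

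For the base case $m=1$ there is nothing new to do: equation~(\ref{eqn:cn(f)}) already gives $c_n(\tau) = i\,c_n(\theta) = c_n(i\theta)$, so $\tau$ is the coefficient sequence of $i\theta$. (The degenerate case $m=0$ is the trivial identity $c_n(1) = \delta_{n,0} = \tfrac{1}{2\pi}\int_{-\pi}^\pi e^{-in\theta}\,d\theta$.) For the inductive step I would assume $\tau^{m-1}$ is the coefficient sequence of $(i\theta)^{m-1}$ and write $\tau^m = \tau * \tau^{m-1}$. Since $(i\theta)^{m-1}$ is continuous on the compact interval $[-\pi,\pi]$ it is bounded, hence in $L^2$, and $i\theta$ lies in $L^2$; so their product $(i\theta)^m$ lies in $L^1$ and the convolution theorem for Fourier series applies, yielding
\begin{equation*}
c_n\bigl((i\theta)^m\bigr) = \sum_{k} c_k\bigl((i\theta)^{m-1}\bigr)\,c_{n-k}(i\theta) = \sum_k (\tau^{m-1})_k\,(\tau)_{n-k} = (\tau * \tau^{m-1})_n = c_n(\tau^m),
\end{equation*}
where the sum converges absolutely by Cauchy--Schwarz because both factor sequences lie in $\ell^2$ (Parseval). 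This is precisely the asserted formula, and as a by-product Parseval shows $\tau^m \in L^2(\mathbb{Z})$, so the convolution powers remain in the space, as the Problem requires.

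The main obstacle is the analytic bookkeeping in the convolution step rather than the algebra: one must be sure that the convolution of the two $\ell^2$ sequences genuinely reproduces the Fourier coefficients of the product function, and that the interchange of summation and integration is legitimate. The device that keeps this clean is holding one factor, $(i\theta)^{m-1}$, bounded, so that the product is integrable and the standard $L^2$ convolution theorem applies verbatim; without this observation the bare fact that a convolution of two $\ell^2$ sequences need only lie in $\ell^\infty$ would leave a genuine gap in the argument.
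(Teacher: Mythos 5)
Your proof is correct and follows essentially the same route as the paper: induction on $m$, with the inductive step identifying the Fourier coefficients of the product $(i\theta)^{m-1}\cdot(i\theta)$ with the convolution $(\tau^{m-1}*\tau)_n = c_n(\tau^m)$. The paper realizes this step by applying Parseval's theorem with $A=(i\theta)^{m-1}$ and $B=\overline{i\theta e^{-in\theta}}$, which is exactly the $L^2$ convolution theorem you invoke, and your boundedness/Cauchy--Schwarz bookkeeping matches the paper's analytic justification.
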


\begin{proof}[{{Proof of Theorem \ref{thm:q=exp(tau)}}}]
The algebraic part of the proof, which relies on
Theorem~\ref{thm:c_n(tau^m)}, is
\begin{align*}
  c_n(\exp(\tau)) &= \sum \frac{c_n(\tau_m)}{m!}\\
  &= \frac{1}{2\pi}\sum\int_{-\pi}^{\pi}e^{-in\theta}\frac{(i\theta)^m}{m!}\, d\theta\\
  &= \frac{1}{2\pi}\int_{-\pi}^{\pi}e^{-in\theta}\sum \frac{(i\theta)^m}{m!}\, d\theta\\
  &= \frac{1}{2\pi}\int_{-\pi}^{\pi}e^{-in\theta} e^{i\theta}\, d\theta\\
  &= \frac{1}{2\pi}\int_{-\pi}^{\pi}e^{i(1-n)\theta}\, d\theta
\end{align*}
and hence $c_1=1$ and $c_n=0$ otherwise.  The analytic part is that
the sum-integral is absolutely convergent and so, by Fubini's theorem,
we can perform the integration first (which then allows us to simplify
the sum).
\end{proof}

\begin{theorem}[Parseval's theorem]
Let $A(x)$ and $B(x)$ be integrable functions on $[-\pi, \pi]$ with
complex Fourier coefficients $a_n$ and $b_n$.  Then
$
\sum_{-\infty}^\infty
a_n\overline{b_n} = 1/2\pi\int_{-\pi}^\pi
A(x)\overline{B(x)}\,dx
$.
\end{theorem}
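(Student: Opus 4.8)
The plan is to recognize Parseval's theorem as the assertion that the complex exponentials $e_n(x) = e^{inx}$, $n \in \mathbb{Z}$, form a complete orthonormal system in the Hilbert space $L^2([-\pi,\pi])$ equipped with the inner product $\langle A, B\rangle = \frac{1}{2\pi}\int_{-\pi}^\pi A(x)\overline{B(x)}\,dx$. With this normalization the Fourier coefficients are exactly $a_n = \langle A, e_n\rangle$ and $b_n = \langle B, e_n\rangle$, so the claimed identity $\sum_n a_n\overline{b_n} = \langle A,B\rangle$ is an instance of the general Hilbert-space fact that, relative to an orthonormal basis $\{e_n\}$, one has $\langle A, B\rangle = \sum_n \langle A, e_n\rangle\overline{\langle B, e_n\rangle}$. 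The whole proof thus reduces to establishing that this particular system is orthonormal and complete.

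First I would check orthonormality by the direct computation $\langle e_m, e_n\rangle = \frac{1}{2\pi}\int_{-\pi}^\pi e^{i(m-n)x}\,dx$, which equals $1$ for $m = n$ and $0$ otherwise --- the same elementary integral already used in the algebraic part of the proof of Theorem~\ref{thm:q=exp(tau)}. Orthonormality alone yields Bessel's inequality $\sum_n |a_n|^2 \le \langle A, A\rangle$, so in particular all the relevant series converge and the right-hand side dominates the left.

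The main obstacle is completeness: promoting Bessel's inequality to the Parseval equality $\sum_n |a_n|^2 = \langle A, A\rangle$. I would obtain this by showing that the trigonometric polynomials, i.e.\ finite linear combinations of the $e_n$, are dense in $L^2([-\pi,\pi])$. The most self-contained route is Fej\'er's theorem: the Ces\`aro means of the Fourier series of a continuous $2\pi$-periodic function converge to it uniformly, placing every continuous function in the $L^2$-closure of the span of $\{e_n\}$; since the continuous functions are themselves dense in $L^2$, the span of $\{e_n\}$ is dense, which is exactly completeness. (The Stone--Weierstrass theorem gives the same conclusion.) This is the genuinely analytic step and the one most deserving of a careful reference.

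Finally I would pass from the norm form to the stated polarized form. Applying $\sum_n |c_n|^2 = \langle C, C\rangle$ to $C = A + B$ and to $C = A + iB$ and combining the results through the polarization identity produces $\sum_n a_n\overline{b_n} = \langle A, B\rangle$, as required. One caveat worth recording is that the hypothesis should really read square-integrable rather than merely integrable, since square-integrability is what keeps both sides finite; in the applications here the theorem is invoked only for bounded functions such as $e^{i\theta}$, where the distinction does not arise.
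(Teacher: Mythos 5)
Your proof is correct, but there is nothing in the paper to compare it against step by step: the paper states Parseval's theorem without proof, importing it as a classical tool whose only role is to drive the induction in the proof of Theorem~\ref{thm:c_n(tau^m)}. What you have supplied is the standard textbook argument that the paper implicitly leans on --- orthonormality of the exponentials $e^{inx}$ under the normalized inner product, Bessel's inequality, completeness of the trigonometric system via Fej\'er's theorem (or Stone--Weierstrass), and then polarization to pass from the norm identity $\sum_n |a_n|^2 = \langle A, A\rangle$ to the sesquilinear form $\sum_n a_n \overline{b_n} = \langle A, B\rangle$ --- and each of these steps is sound. Two of your remarks deserve emphasis. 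First, your caveat about the hypothesis is an actual correction to the paper's statement: for functions that are merely integrable ($L^1$) the Fourier coefficients exist but the series $\sum_n a_n\overline{b_n}$ need not converge, so the theorem as printed is false without a square-integrability assumption; your observation that the paper only ever applies it to bounded functions such as $(i\theta)^{m-1}$ and $i\theta e^{-in\theta}$, for which $L^1$ and $L^2$ membership coincide on $[-\pi,\pi]$, shows the lapse is harmless in context but worth repairing. Second, you are right that completeness is the one genuinely analytic ingredient that cannot be obtained by formal manipulation; since the paper elsewhere apologizes for proving Fourier-series facts ``for lack of a suitable reference,'' flagging Fej\'er's theorem as the step requiring such a reference is exactly the right instinct.
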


\begin{proof}[{{Proof of Theorem \ref{thm:c_n(tau^m)}}}]
We rewrite the result to be proved as
\begin{equation*}
  c_n(\tau^m) = \frac{1}{2\pi}\int_{-\pi}^{\pi}\> 
  (i\theta)^{m-1}
  \times
  (i\theta)
  \>
  e^{-in\theta}
  \>
  d\theta
\end{equation*}
and apply Parseval's theorem with $A=(i\theta)^{m-1}$ and $B=
\overline{i\theta e^{-in\theta}}$ (and an induction hypothesis). This
tells us that the right hand side is equal to
$
  \sum c_k(\tau^{m-1})\overline{c_k(B)}
$
and as
\begin{equation*}
  c_k(\overline{i\theta e^{-in\theta}}) 
  = \frac{1}{2\pi}\int_{-\pi}^{\pi}\overline{i\theta} e^{in\theta}e^{-ik\theta}d\theta
  = c_{n-k}(\tau)
\end{equation*}
the result follows.
\end{proof}

The concludes the proof that $e^\tau=q$.  The remaining steps required
to prove the rest of the conjecture are straightforward, and are left
to the reader.  A longer version of this note, together with
speculation on how the extending the conjecture to knots and proving
it is at~\cite{fine2009vassiliev}.  Proving the extension is probably
hard.

\acks 

I thank Phil Rippon for help with the proof of
Theorem~\ref{thm:q=exp(tau)}, and Joel Fine for reading an earlier
version of this paper. Any remaining errors are mine.

\bibliographystyle{amsplain}
\bibliography{braids}

\end{document}